\definecolor{red}{rgb}{1,0,0}
\definecolor{blue}{rgb}{.2,.2,.8}
\def\m{\mu}
\def\T{\mathcal T}
\def\S{\mathcal S}
\def\C{\mathcal C}
\def\A{\mathcal A}
\def\D{\mathcal D}
\def\OO{\mathcal O} 
\def\MD{\mathcal{MD}}
\def\MDS{\mathcal{DD}}
\def\OD{\overline{\mathcal{D}}}
\newtheorem{theorem}{Theorem}[section]
\newtheorem{corollary}[theorem]{Corollary}
\newtheorem{proposition}{Proposition}
\theoremstyle{definition}
\newtheorem{definition}{Definition}
\newtheorem{example}{Example}
\newtheorem{remark}{Remark}
\newcommand{\ds}{\displaystyle}
\begin{document}

\title{Combinatorial proofs of two Euler type identities due to  Andrews}
\author{Cristina Ballantine
	\and   Richard Bielak
}
\date{}
\maketitle

\begin{abstract} Let $a(n)$ be the number of partitions of $n$ such that the set of even parts has exactly one element, $b(n)$ be the difference between the number of parts in all odd partitions of $n$ and the number of parts in all distinct partitions of $n$, and $c(n)$ be the number of partitions of $n$ in which exactly one part is repeated. Beck conjectured that $a(n)=b(n)$ and Andrews, using generating functions,  proved that $a(n)=b(n)=c(n)$. We give a combinatorial proof of Andrews' result. Our proof relies on  bijections between a set and a multiset, where the partitions in the multiset are decorated with bit strings. We  prove combinatorially Beck's second identity, which was also proved by Andrews using generating functions. Let $c_1(n)$ be the number of partitions of $n$ such that there is exactly one part occurring three times while all other parts occur only once and let $b_1(n)$ to be the difference between the total number of parts in the partitions of $n$ into distinct parts and the total number of different parts in the partitions of $n$ into odd parts. Then, $c_1(n)=b_1(n)$. \\ \\
{\bf Keywords:}  partitions, Euler's identity,  bit strings, overpartitions.
\\
\\
{\bf MSC 2010:}  05A17, 11P81, 11P83
\end{abstract}

\section{Introduction}

In \cite{A17}, following conjectures of Beck, Andrews  considered what happens if one relaxes the conditions on parts in  Euler's partition identity. He gave analytic proofs of related identities. In this article, we provide combinatorial proofs of his results and add one more special case. 

Given a non negative integer $n$, a \textit{partition} $\lambda$ of $n$ is a non increasing sequence of positive integers $\lambda=(\lambda_1, \lambda_2, \ldots \lambda_k)$ that add up to $n$, i.e., $\ds\sum_{i=1}^k\lambda_i=n$. The numbers $\lambda_i$ are called the \textit{parts} of $\lambda$ and $n$ is called the \textit{size} of $\lambda$. The number of parts of the partition is called the \textit{length} of $\lambda$ and is denoted by $\ell(\lambda)$.

\medskip

Let $\OO(n)$ be the set of partitions of $n$ with all parts odd and let $\D(n)$ be the set of partitions of $n$ with distinct parts. Then, Euler's identity states that $|\OO(n)|=|\D(n)|$. 

Let $\A(n)$ be the set of partitions of $n$ such that the set of even parts has exactly one element. Let $\C(n)$ be the set of partitions of $n$ in which exactly one part is repeated. 

Let $a(n)=|\A(n)|$ and $c(n)=|\C(n)|$. Let $b(n)$ be the difference between the number of parts in all odd partitions of $n$ and the number of parts in all distinct partitions of $n$. Thus, $b(n)$ is the difference between the number of parts in all partitions in $\OO(n)$ and the number of parts in all partitions in $\D(n)$.  In \cite{B1, B2}, Beck conjectured that $a(n)=b(n)=c(n)$.  Andrews proved these identities in \cite{A17}  using generating functions.   In \cite{FT17}, Fu and Tang gave two generalizations of this result. For one of the generalizations, Fu and Tang give a combinatorial proof and, as a particular case, they obtain a combinatorial proof for $a(n)=c(n)$.  We give a combinatorial proof for the identities involving $b(n)$. 

\begin{theorem} \label{T1} Let $n\geq 1$. Then,  \medskip

(i) $a(n)=b(n)$

\medskip

(ii) $c(n)=b(n)$. \end{theorem}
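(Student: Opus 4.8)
My plan is to turn the signed quantity $b(n)$ into an honest cardinality and only then match it with $\A(n)$ and $\C(n)$. For a set $\S$ of partitions write $\mathrm{tot}(\S)=\sum_{\lambda\in\S}\ell(\lambda)$, so that $b(n)=\mathrm{tot}(\OO(n))-\mathrm{tot}(\D(n))$. The first step is to push both statistics through Glaisher's bijection $g\colon\OO(n)\to\D(n)$, which replaces the $m$ copies of an odd value $v$ by the distinct parts $2^{a}v$, where $a$ runs over the positions of the $1$-bits in the binary expansion of $m$. This map does not preserve the number of parts, but the defect is completely explicit: writing each part of $g(\lambda)$ uniquely as $p=2^{a}v$ with $v$ odd, that part accounts for exactly $2^{a}$ parts of $\lambda$, so $\ell(\lambda)=\sum_{p\in g(\lambda)}2^{a(p)}$. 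Summing over $\lambda$ (equivalently over $\mu=g(\lambda)\in\D(n)$) and subtracting $\mathrm{tot}(\D(n))=\sum_{\mu}\ell(\mu)$ yields the clean reformulation
\[
b(n)=\sum_{\mu\in\D(n)}\ \sum_{\substack{p\in\mu,\ p\ \mathrm{even}\\ p=2^{a}v,\ v\ \mathrm{odd}}}\bigl(2^{a}-1\bigr),
\]
in which every summand is nonnegative and only even parts contribute, so the right-hand side is manifestly a cardinality.

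The second step is to read this as the size of a decorated multiset — precisely the object promised in the abstract. Namely, $b(n)$ counts the triples $(\mu,p,s)$ where $\mu\in\D(n)$, $p=2^{a}v$ is a distinguished even part of $\mu$, and $s$ is one of the $2^{a}-1$ nonzero bit strings of length $a$. The goal is then a weight-preserving bijection from this decorated multiset onto $\A(n)$ (for part (i)) and onto $\C(n)$ (for part (ii)). My candidate rule lets $s$ prescribe how to unfold the distinguished part: decode $s$ as a pair $(b,i)$ with $1\le b\le a$ and $1\le i\le 2^{a-b}$ — a bijection, since $\sum_{b=1}^{a}2^{a-b}=2^{a}-1$ — then pass to the all-odd partition $g^{-1}(\mu)$, and inside the block of $2^{a}$ copies of $v$ that $p$ contributes, merge $2^{b}i$ copies into $i$ copies of the single even value $2^{b}v$, leaving the remaining $2^{a}-2^{b}i$ copies of $v$ odd. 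Since $g^{-1}(\mu)$ is entirely odd to begin with, the output has all its even parts concentrated in the one value $2^{b}v$ and so lands in $\A(n)$.

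The main obstacle is bijectivity rather than well-definedness. The unfolding above can create leftover odd copies of $v$ that become indistinguishable from copies of $v$ already present elsewhere in $g^{-1}(\mu)$; the total multiplicity of $v$ and the choice of the bit $a$ are then lost, so the data $(p,s)$ cannot be recovered from the image. Resolving this is exactly where the bit strings must do real work: I expect to fix a canonical order in which the copies of each odd value are processed and to let the decoration simultaneously record how the surviving odd copies interleave with the pre-existing ones, so that the whole assignment can be reversed. Verifying that this bookkeeping yields a genuine bijection onto $\A(n)$ is the technical heart of the argument.

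Finally, part (ii) can be reached two ways. Either one runs a parallel construction with $\C(n)$ as target — keeping $\mu\setminus\{p\}$ distinct and using $s$ to split the distinguished part $p$ into a single repeated part, so that exactly one value is repeated — or, since Fu and Tang have already given a combinatorial proof that $a(n)=c(n)$, one simply combines their identity with part (i). I would present the direct bijection onto $\C(n)$ so that the paper remains self-contained, using the same decorated multiset $\{(\mu,p,s)\}$ and the same standardization argument to guarantee that precisely one repeated part is produced.
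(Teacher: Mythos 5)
Your reduction of $b(n)$ to a genuine cardinality is correct and is exactly the paper's starting point: pushing the part-count through Glaisher's map and writing the defect as $\sum_{p=2^a v \text{ even}}(2^a-1)$ per partition $\mu\in\D(n)$ is the paper's weight $wt(\mu)$, and your triples $(\mu,p,s)$ are the paper's decorated partitions $\MDS(n)$ (your nonzero length-$a$ strings versus the paper's arbitrary strings of length $0\le\ell(w)\le a-1$ is an immaterial re-indexing of the same $2^a-1$ choices). The gap is in the step you yourself flag as "the technical heart": your candidate map is provably not injective, and the repair you gesture at is not an argument. Concretely, if $\mu$ has two even parts $2^a v$ and $2^{a'}v$ with the same odd core $v$, then distinguishing $p=2^a v$ with data $(b,i)$ and distinguishing $p'=2^{a'}v$ with the same $(b,i)$ produce the identical partition: both have the even value $2^b v$ with multiplicity $i$, and the leftover odd copies of $v$ total $2^a+2^{a'}-2^b i$ plus the rest in either case. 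No "canonical processing order" can fix this, because the image partition simply does not contain enough information; the fix has to change the map itself.

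What the paper does differently, and what actually makes the inverse possible, is to make the unfolding global rather than local to the distinguished part: every even part $2^t m$ of $\mu$ with the same odd core $m$ and $t$ larger than the decorated exponent is also converted into copies of the new even value $2^{k-\ell(w)}m$, every smaller one into copies of $m$, and every even part with a different odd core is dissolved entirely into odd parts. As a result the multiplicity $f$ of the unique even value in the image is large enough to record, via the partial sums of the parts $2^{r_i}m$ of $\mu$, which part was decorated (the index $h$ with $\sum_{i<h}\mu_{j_i}<f\cdot 2^km\le\sum_{i\le h}\mu_{j_i}$) and what the decoration was ($d=f-N_h-1$, $\ell=r_h-k$). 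Your proposal never supplies an analogue of this recovery step, and part (ii) is sketched even more loosely (the fallback via Fu--Tang combined with part (i) would be legitimate, but part (i) itself is incomplete). So the framework matches the paper, but the bijections that constitute the proof are missing.
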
 The novelty of our approach is the use of partitions decorated by bit strings. This allows us to create bijections between a set of partitions and a multiset of partitions. We distinguish the partitions in the multiset via decorations with bit strings. 

\begin{definition} A \textit{bit string} $w$ is a sequence of letters from the alphabet $\{0,1\}$. The \textit{length} of a bit string $w$, denoted $\ell(w)$, is the number of letters in $w$. We refer to \textit{position} $i$ in $w$ as the $i$th entry from the right, where the most right entry is counted as position $0$. 
\end{definition}

Note that leading zeros are allowed and are recorded. Thus $010$ and $10$ are different bit strings even though they are the binary representation of the same number. We have $\ell(010)=3$ and $\ell(10)=2$. The empty bit string has length $0$ and is denoted by $\emptyset$. \medskip

Let $\T(n)$ be the subset of $\C(n)$ consisting of partitions of $n$ in which  one part is repeated exactly three times and all other parts occur only once. Let $c_1(n)=|\T(n)|$. Let $b_1(n)$ to be the difference between the total number of parts in the partitions of $n$ into distinct parts and the total number of \textit{different} parts in the partitions of $n$ into odd parts. Thus, $b_1(n)$ is the difference between the number of parts in all partitions in $\D(n)$ and the number of different parts in all partitions in $\OO(n)$ (i.e., parts  counted without multiplicity). We will prove combinatorially the following Theorem.

\begin{theorem} \label{T2} Let $n\geq 1$. Then,  $c_1(n)=b_1(n)$.
\end{theorem}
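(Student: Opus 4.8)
The plan is to rewrite $b_1(n)$ as the cardinality of a single combinatorial family and then biject that family with $\T(n)$. First I would interpret the total number of parts in $\D(n)$ as the set $X(n)$ of pairs $(\lambda,P)$ with $\lambda\in\D(n)$ and $P$ a part of $\lambda$, and the total number of different parts in $\OO(n)$ as the set $Y(n)$ of pairs $(\mu,d)$ with $\mu\in\OO(n)$ and $d$ a part value occurring in $\mu$, so that $b_1(n)=|X(n)|-|Y(n)|$. Applying the bijection underlying Euler's identity (Glaisher's map $g\colon\OO(n)\to\D(n)$, which replaces each odd part $d$ of multiplicity $m$ by the distinct parts $2^a d$, with $a$ ranging over the binary support of $m$), a distinct value $d$ of $\mu$ corresponds precisely to the class of parts of $\lambda=g(\mu)$ sharing the odd \emph{core} $d$, where the core of a positive integer is its largest odd divisor. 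Hence $|Y(n)|$ equals the number of pairs $(\lambda,e)$ with $\lambda\in\D(n)$ and $e$ an odd core occurring among the parts of $\lambda$.

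Writing each part of $\lambda$ as $2^a e$ with $e$ odd and grouping parts by their core $e$, the pairs in $X(n)$ in which the marked part $P$ is the \emph{smallest} part of its core class are in obvious bijection with the pairs counted by $|Y(n)|$ (one marked class, represented by its minimal part). Removing these, I would be left with the family $B(n)$ of pairs $(\lambda,P)$ in which $P=2^a e$ is \emph{not} the smallest part of its core class, i.e.\ some $2^{a'}e$ with $a'<a$ is also a part of $\lambda$; consequently $b_1(n)=|X(n)|-|Y(n)|=|B(n)|$. It therefore suffices to construct a bijection $B(n)\cong\T(n)$. At the level of generating functions this reduction mirrors the term-by-term identity $\frac{q^k-q^{3k}}{1+q^k}=q^k-q^{2k}$, which is what I would use as a guide for the map below.

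The map from $\T(n)$ to $B(n)$ is a controlled carry cascade, which is exactly where the bit-string bookkeeping of our method enters. Given $\sigma\in\T(n)$ with tripled part $t$ and distinct remainder $\rho$ (so $t\notin\rho$), I would repeatedly merge equal parts as in Glaisher's map: two copies of $t$ merge to $2t$; if $2t$ already occurs in $\rho$ they merge to $4t$; continuing along the chain $2t,4t,\dots$ until reaching the first power $2^m t$ not occurring, one obtains a partition $\tau\in\D(n)$ together with the distinguished part $P=2^m t$, whose core class also contains the smaller part $t$, so that $(\tau,P)\in B(n)$. The presence/absence pattern of the chain $2t,4t,\dots$ is recorded by a bit string, which is what makes the cascade reversible: given $(\tau,P)\in B(n)$ with $P=2^a e$, I recover $t$ as the largest part of $\tau$ of core $e$ lying below $P$, reinstate the (necessarily absent) intermediate parts $2t,\dots,2^{m-1}t$, delete $P$ and $t$, and append three copies of $t$. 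The main obstacle is checking that this cascade is well defined and genuinely inverse to itself: one must verify that the intermediate parts consumed in the forward direction are exactly those forced to be absent in $\tau$, that $\rho$ stays distinct with $t\notin\rho$ throughout, and that sizes are preserved ($|\tau|=|\sigma|=n$). Once this is established, combining the three bijections yields $c_1(n)=|\T(n)|=|B(n)|=b_1(n)$.
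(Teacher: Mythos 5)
Your proposal is correct and takes essentially the same approach as the paper: your family $B(n)$ of pairs $(\lambda,P)$ with $P\in\D(n)$-partition $\lambda$ not minimal in its odd-core class is exactly the paper's set $\OD(n)$ of distinct-part overpartitions with one overlined (non-minimal-in-its-core-class) part, and your carry cascade on the tripled part is precisely the paper's Glaisher-merge/split bijection between $\T(n)$ and $\OD(n)$.
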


After the work for this paper was finished we found out that Jane Y.X. Yang has proved these results in greater generality \cite{Y18}. However, our approach using decorations with bit strings allows us to extent Theorem \eqref{T2} to an Euler type identity in theorem \eqref{T3}. 
 
\section{Combinatorial proofs of Theorems \ref{T1} and \ref{T2}}
\subsection{$b(n)$ as the cardinality of a multiset of partitions}\label{b}

First, we recall Glaisher's bijection $\varphi$ used to prove Euler's identity. It is the map from the set of partitions with odd parts to the set of partitions with distinct maps which merges equal parts repeatedly. 

\begin{example} For $n=7$, Glaisher's bijection is given by 

\begin{center}
\begin{tabular}{ccc}
$(7)$ & $\stackrel{\varphi}{\longrightarrow}$ & $(7)$\\ \ \\ $(5, \underbrace{1,1})$ &$\longrightarrow$ & $(5,2)$ \\ \ \\ $(\underbrace{3,3},1)$ &$\longrightarrow$ & (6,1)\\ \ \\ $(3,\underbrace{\underbrace{1,1,}\underbrace{1,1}})$ & $\longrightarrow$ & $(4,3)$\\ \ \\ $(\underbrace{\underbrace{1,1,} \underbrace{1,1,}} \underbrace{1,1,} 1)$&  $\longrightarrow$ & $(4,2,1)$
\end{tabular}
\end{center}
\end{example}
Thus, each partition  $\lambda\in \OO(n)$, has at least as many parts as its image $\varphi(\lambda)\in \D(n)$.   

When calculating $b(n)$, the difference between the number of parts in all odd partitions of $n$ and the number of parts in all distinct partitions of $n$, we sum up the differences in the number of parts in each pair  ($\lambda,\varphi(\lambda))$. Write each part $\mu_j$ of  $\mu=\varphi(\lambda)$ as  $\mu_j=2^{k_j}\cdot m_j$ with $m_j$ odd. Then, $\mu_j$ was obtained by merging $2^{k_j}$ parts in $\lambda$ and thus contributes an excess of $2^{k_j}-1$ parts to the difference. Therefore, the difference between the number of parts of $\lambda$ and the number of parts of $\varphi(\lambda)$ is $\ds \sum_{j=1}^{\ell(\varphi(\lambda))} (2^{k_j}-1)$.

\begin{definition} Given a partition $\mu$ with parts $\mu_j=2^{k_j}\cdot m_j$, where $m_j$ odd, the \textit{weight} of the partition is $wt(\mu)=\ds \sum_{j=1}^{\ell(\mu)} (2^{k_j}-1)$.
\end{definition}
Thus, $wt(\mu)>0$ if and only if $\mu$ contains at least one even part. 

We denote by $\MD(n)$ the multiset of partitions of $n$ with distinct parts in which every partition $\mu\in\D(n)$ appears exactly $wt(\mu)$ times. For example, $wt(4,2,1)=4$  and $(4,2,1)$ appears four times in $\MD(7)$.  Since $wt(7)=0$, the partition $(7)$ does not appear in $\MD(7)$.

The discussion above proves the following interpretation of $b(n)$. 

\begin{proposition} Let $n\geq 1$. Then, $b(n)=|\MD(n)|$.  

\end{proposition}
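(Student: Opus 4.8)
The plan is to show that the quantity $b(n)$, which has already been established as the sum over all pairs $(\lambda, \varphi(\lambda))$ of the excess number of parts, equals the cardinality of the multiset $\MD(n)$. The key observation has in fact already been assembled in the discussion preceding the proposition, so the proof is essentially a matter of organizing that discussion into a clean summation argument.

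First I would recall that $b(n)$ is, by definition, the total number of parts in all partitions in $\OO(n)$ minus the total number of parts in all partitions in $\D(n)$. Since Glaisher's bijection $\varphi$ is a bijection between $\OO(n)$ and $\D(n)$, I can reindex both totals by summing over the pairs $(\lambda, \varphi(\lambda))$ with $\lambda \in \OO(n)$. This gives
\[
b(n) = \sum_{\lambda \in \OO(n)} \bigl(\ell(\lambda) - \ell(\varphi(\lambda))\bigr).
\]
Next I would invoke the computation carried out just before the proposition: writing each part $\mu_j = 2^{k_j} m_j$ of $\mu = \varphi(\lambda)$ with $m_j$ odd, the part $\mu_j$ arose by merging $2^{k_j}$ equal parts of $\lambda$, so the excess for that pair is precisely $\sum_{j=1}^{\ell(\varphi(\lambda))} (2^{k_j}-1) = wt(\varphi(\lambda))$. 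Substituting, and using again that $\varphi$ is a bijection to replace the sum over $\lambda \in \OO(n)$ by a sum over $\mu \in \D(n)$, I obtain
\[
b(n) = \sum_{\lambda \in \OO(n)} wt(\varphi(\lambda)) = \sum_{\mu \in \D(n)} wt(\mu).
\]
Finally, since $\MD(n)$ is defined to contain each $\mu \in \D(n)$ with multiplicity exactly $wt(\mu)$, the sum $\sum_{\mu \in \D(n)} wt(\mu)$ is by definition the cardinality $|\MD(n)|$, which completes the argument.

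There is no serious obstacle here: the genuine content — that the per-pair excess equals $wt(\varphi(\lambda))$ — is exactly the merging analysis already given, and the only remaining ingredient is the bijectivity of $\varphi$, which lets me pass freely between summing over $\OO(n)$ and summing over $\D(n)$. The one point I would take care to state explicitly is that partitions $\mu$ with $wt(\mu)=0$ (equivalently, those with no even part, such as $(7)$) contribute nothing and correctly do not appear in $\MD(n)$, so the equality of the multiset cardinality with the weighted sum holds with no edge-case discrepancy.
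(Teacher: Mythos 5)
Your proof is correct and is essentially the same argument the paper gives: the paper's "proof" is precisely the preceding discussion that the per-pair excess of parts under Glaisher's bijection equals $wt(\varphi(\lambda))$, summed over all pairs. You have merely written out the reindexing via bijectivity of $\varphi$ explicitly, which is a faithful formalization of the paper's reasoning.
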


 To create  bijections from $\A(n)$ to $\MD(n)$ and from $\C(n)$ to $\MD(n)$, we need to distinguish identical elements of $\MD(n)$ and thus view it as a set. Recall that all partitions in $\MD(n)$ have distinct parts and at least one even part.  

\begin{definition} A \textit{decorated partition} is a  partition $\mu$ with  at least one even part  in which  one single \textit{even} part, called the decorated part, has a bit string $w$ as an index.   If the decorated part is $\mu_i=2^km$, where $k\geq 1$ and $m$ is odd, the index  $w$ has  length  $0\leq\ell(w)\leq k-1$. \end{definition} Since there are $2^t$ distinct bit strings of length $t$, there are $2^k-1$ distinct bit strings $w$ of length $0\leq\ell(w)\leq k-1$. Thus, for each even part $\mu_i=2^km$ of $\mu$ there are $2^k-1$ possible indices and for each partition $\mu$ there are precisely $wt(\mu)$ possible decorated partitions with the same parts as $\mu$. \medskip

We denote by $\MDS(n)$ the set of decorated partitions of $n$ with distinct parts. Note that, by definition, a decorated partition has at least one even part. Then, $$|\MD(n)|=|\MDS(n)|$$ and therefore $$b(n)=|\MDS(n)|.$$

\subsection{A combinatorial proof for $a(n)=b(n)$}

We prove that $a(n)=b(n)$ by establishing a one-to-one correspondence between $\A(n)$ and $\MDS(n)$.\medskip

\noindent \textit{From $\MDS(n)$ to $\A(n)$:}\label{part i}

Start with a decorated partition $\mu \in \MDS(n)$. Suppose part $\mu_i=2^k m$, with $k\geq 1$ and $m$ odd, is decorated with bit string $w$ of length $\ell(w)$. Then, $0\leq\ell(w)\leq k-1$. Let $d_w$ be the decimal value of of $w$. We set $d_\emptyset=0$. \medskip

The decorated part will split into two kinds of parts: $2^j m$ and $m$. The length of $w$, $\ell(w)$, determines the size of $2^jm$ and  $d_w$ determines the number of parts of size $2^jm$ that split from the decorated part. All even parts with the same odd part that are larger than the decorated part split into parts equal to $2^jm$. All even parts with the same odd part that are less than the decorated part split into parts equal to $m$. Each of the other even parts splits completely into odd parts. We describe the precise algorithm below.\medskip

1. Split part $\mu_i$ into $d_w+1$ parts of size $2^{k-\ell(w)}m$ and parts of size $m$. Thus, there will be $2^k-(d_w+1)2^{k-\ell(w)}$ parts of size $m$. 
Since $d_w+1\leq 2^{\ell(w)}$,  the resulting number of parts equal to $m$ is non-negative. Moreover,  after the split there is at least one even part. \medskip

2. Every part of size $2^tm$, with $t>k$ (if it exists), splits completely into parts of size $2^{k-\ell(w)}m$, i.e., into $2^{t-k+\ell(w)}$ parts of size $2^{k-\ell(w)}m$. 
\medskip

3. Every other even part splits into odd parts of equal size. I.e., every  part $2^u v$ with $v$ odd, such that $2^u v\neq 2^sm$ for some $s\geq k$, splits into $2^u$ parts of size $v$. \medskip

The resulting partition, $\lambda$ is in $\A(n)$. Its set of even parts is $\{2^{k-\ell(w)}m\}$.

\begin{example} Consider the decorated partition $$\mu=(96,35,34, 24_{01},6,2)=(2^5\cdot 3, 35, 2\cdot 17, (2^3\cdot 3)_{01}, 2\cdot 3, 2)\in \MDS(197).$$
We have $k=3, m=3, \ell(w)=2, d_w=1$. \medskip

1. Part $24=2^3\cdot 3$ splits into two parts of size $6$ and four parts of size $3$. \medskip

2. Part $96=2^5\cdot 3$ splits into $16$ parts of size $6$. \medskip

3. All other even parts split into odd parts. Thus, part $34$ splits into two parts of size $17$,  part $6$ splits into two parts of size $3$, and part $2$ splits into two parts of size $1$. \medskip

The resulting partition is $\lambda=(35, 17^2, 6^{18}, 3^6, 1^2)\in\A(197)$.\medskip

Similarly, the transformation maps the decorated partition \\ $(96,35,34, 24_{10},6,2)\in \MDS(197)$ to  $(35, 17^2, 6^{19}, 3^4, 1^2)\in\A(197)$.

\end{example}

\noindent \textit{From $\A(n)$ to $\MDS(n)$:}

Start with partition $\lambda\in \A(n)$. Then there is one and only one even number $2^k m$, $k\geq 1, m$ odd,  among the parts of $\lambda$. Let $f$ be the multiplicity of the even part in $\lambda$. As in Glaisher's bijection, we merge equal parts repeatedly until we obtain a partition $\mu$with distinct parts. Since $\lambda$ has an even part, $\mu$will also have an even part. 

Next, we determine the decoration of $\mu$. Consider the parts $\m_{j_i}$ of the form $2^{r_i} m$, with $m$ odd and  $r_i\geq k$. We have $j_1<j_2<\cdots$. For notational convenience, set $\m_{j_0}=0$. Let $h$ be the positive integer such that 
\begin{equation}\label{ineq-h}\sum_{i=0}^{h-1}\m_{j_i}<f\cdot 2^k m \leq \sum_{i=0}^{h}\m_{j_i}.\end{equation} Then, we will decorate part $\m_{j_h}=2^{r_h}m$.

To determine the decoration, let $N_h$ be the number of parts $2^km$ in $\lambda$ that merged to form all parts of the form $2^rm>\m_{j_h}$. 
Thus,  $$N_h=\frac{\ds\sum_{i=0}^{h-1}\m_{j_i}}{2^km}.$$ Then, \eqref{ineq-h} becomes $$2^kmN_h<f\cdot 2^km\leq 2^kmN_h+2^{r_h}m,$$ which in turn
implies $N_h<f\leq N_h+2^{r_h-k}$. Therefore, $0<f-N_h\leq 2^{r_h-k}$.

Let $d=f-N_h-1$ and $\ell=r_h-k$. We have $0\leq \ell \leq r_h-1$. Consider the binary representation of $d$ and insert leading zeros to form a bit string $w$ of length $\ell$. Decorate $\m_{j_h}$ with $w$. The resulting decorated partition is in $\MDS(N)$. 

\begin{example} Consider the partition $\lambda=(35, 17^2, 6^{18}, 3^6, 1^2)\in\A(197)$. We have $k=1, m=3, f=18$. Glaisher's bijection produces the partition $\mu=(96,35,34, 24,6,2)\in \MD(197)$. The parts of the form $2^{r_i}\cdot 3$ with $r_i\geq 1$ are $96,24,6$. Since $96<18 \cdot 6 \leq 96+24$ the decorated part will be $24=2^3\cdot 3$. We have $N_h=96/6=16$. To determine the decoration, let $d=18-16-1=1$ and $\ell=3-1=2$. The binary representation of $d$ is $1$. To form a bit string of length $2$ we introduce one leading $0$. Thus, the decoration is $w=01$ and the resulting decorated partition is $(96,35,34, 24_{01},6,2)\in \MDS(197)$.
\medskip
Similarly, starting with  $(35, 17^2, 6^{19}, 3^4, 1^2)\in\A(197)$, after applying Glaisher's bijection we obtain $\mu=(96,35,34, 24,6,2)\in \MD(197)$. All parameters are the same as in the previous example with the exception of $f=19$. As before, the decorated part is $24$ and $\ell=2$. We have $d=19-16-1=2$ whose binary representation is $10$ and already has length 2. Thus $w=10$ and the resulting decorated partition is $(96,35,34, 24_{10},6,2)\in \MDS(197)$.

\end{example}

\subsection{A combinatorial proof for $c(n)=b(n)$}

We could compose the bijection of section \ref{part i} with the bijection of \cite{FT17} (for $k=2$) to obtain a combinatorial proof of part (ii) of Theorem \ref{T1}. We give an alternative  proof 
 that $c(n)=b(n)$ by establishing a one-to-one correspondence between $\C(n)$ and $\MDS(n)$ that does not involve the bijection of \cite{FT17}.\medskip

\noindent \textit{From $\MDS(n)$ to $\C(n)$:}

Start with a decorated partition $\mu \in \MDS(n)$. Suppose part $\mu_i=2^k m$, with $k\geq 1$ and $m$ odd, is decorated with bit string $w$ of length $\ell(w)$ and decimal value $d_w$. Then, $0\leq\ell(w)\leq k-1$. \medskip

  If $w=\emptyset$, split part $\mu_i$  into two equal parts of size $2^{k-\ell(w)-1}m$. The partition $\lambda$ obtained in this way from $\mu  $ is in $\C(n)$. If $w\neq\emptyset$ we obtain $\lambda$ from $\mu$ by performing the following steps. \medskip
  
  1. Split $\mu_i$ into $2(d_w+1)$ parts of size $2^{k-\ell(w)-1}m$ and a part of size $2^{i+k-\ell(w)}m$ for each $i$ such that there is a $0$ in position $i$  in $w$. \medskip
  
  2. Each part $2^tm$ with $k-\ell(w)-1\leq t<k$ splits completely  into parts of size $2^{k-\ell(w)-1}m$, i.e., into $2^{t-k+\ell(w)+1}$ parts of size $2^{k-\ell(w)-1}m$.
\medskip

Since $2(d_w+1)\geq 2$, the obtained partition $\lambda$ is in $\C(n)$. The repeated part is $2^{k-\ell(w)-1}m$. 

\medskip

\noindent {\bf Note:} In step 1. above, after splitting off $2(d_w+1)$ parts of size $2^{k-\ell(w)-1}m$ from $\mu_i$, we are left with $r=2^{k-\ell(w)}(2^{\ell(w)}-d_w-1)m$ to split into distinct parts. We do this by using  Glaisher's transformation $\varphi$ on $r/m$ parts equal to $m$.  Thus, after the splitting, we will have a part equal to $2^jm$ if and only if the binary representation of $r/m$ has a $1$ in position $j$. However, $2^{\ell(w)}-1$ is a bit string of length $\ell(w)$ with every entry equal to $1$. Then, the binary representation of $2^{\ell(w)}-d_w-1$ (filled with leading zeros if necessary to create a bit string of length $\ell(w)$) is precisely the complement of $w$, i.e., the bit string obtained from $w$ by replacing every $0$ by $1$ and every $1$ by $0$. 

\begin{example} Consider the decorated partition $$\mu=( 768, 384_{0110}, 105, 96, 25, 12, 9, 6,2)\ \ \ \ \ \ \ \ \ \ \ \ \ \ \ \ \ \ \ \ \ \ \ \ \  \ \ \ \ \ \ \ \ \ \ \ $$ $$\ \ \ \ \ =(2^8\cdot 3, (2^7\cdot 3)_{0110}, 105, 2^5\cdot 3, 25, 2^2\cdot 3, 9, 2\cdot 3, 2\cdot 1)\in\MDS(1407). $$ We have $k=7, w=0110, \ell(w)=4, d_w=6$. The decorated part is $\mu_2$.  

1. Since $2(d_w+1)=14$ and $w$ has zeros in positions $0$ and $3$, $\mu_2$ splits into $14$ parts of size $2^2\cdot 3$ and one part each of sizes $2^3\cdot 3$ and $2^6\cdot 3$. 

2. The parts of the form $2^t\cdot 3$ with $2\leq t<7$ are $\mu_4=2^5\cdot 3$ and $\mu_6=2^2\cdot 3$. Then, $\mu_4$ splits into $2^3$ parts of size $2^2\cdot 3$ and $\mu_6$ "splits" into one part of size $2^2\cdot 3$.

We obtain the partition $$\lambda=(2^8\cdot 3, 2^6\cdot 3, 105, 25, 2^3\cdot 3, \underbrace{2^2\cdot 3, \ldots, 2^2\cdot 3}_{23 \mbox{ times }}, 9, 2\cdot 3, 2 \cdot 1)$$ $$=(768,192, 105, 25, 24, 12^{23}, 9, 6, 2)\in \C(n).$$
\end{example}\medskip

\noindent \textit{From $\C(n)$ to $\MDS(n)$:}

Start with partition $\lambda\in \C(n)$. Then there is one and only one repeated part  among the parts of $\lambda$. Suppose the repeated part is  $2^k m$, $k\geq 0, m$ odd,  and denote by  $f\geq 2$ its  multiplicity in $\lambda$. As in Glaisher's bijection, we merge equal parts repeatedly until we obtain a partition $\mu$with distinct parts. Since $\lambda$ has a repeated part, $\mu$will  have at least one  even part.

Next, we determine the decoration of $\mu$.  In this case, we want to work with the parts of $\mu$ from the right to the left (i.e., from smallest to largest part). Let $\tilde{\mu}_q=\mu_{\ell(\mu)-q+1}$. 
 Consider the parts $\tilde\m_{j_i}$ of the form $2^{r_i} m$, with $m$ odd and  $r_i\geq k$. We have $j_1<j_2<\cdots$. 

As before, we set $\tilde{\m}_{j_0}=0$. Let $h$ be the positive integer such that 
\begin{equation}\label{ineq1-h}\sum_{i=0}^{h-1}\tilde\m_{j_i}<f\cdot 2^k m \leq \sum_{i=0}^{h}\tilde\m_{j_i}.\end{equation} Then, we will decorate part $\tilde\m_{j_h}=2^{r_h}m$.

To determine the decoration, let $N_h$ be the number of parts $2^km$ in $\lambda$ that merged to form all parts of the form $2^rm<\tilde\m_{j_h}$.
Thus,  $$N_h=\frac{\ds\sum_{i=0}^{h-1}\tilde\m_{j_i}}{2^km}.$$ Then, \eqref{ineq1-h} becomes $$2^kmN_h<f\cdot 2^km\leq 2^kmN_h+2^{r_h}m,$$ which in turn
implies $N_h<f\leq N_h+2^{r_h-k}$. Therefore, $0<f-N_h\leq 2^{r_h-k}$.

Let $\ds d=\frac{f-N_h}{2}-1$ and $\ell=r_h-k-1$. We have $0\leq \ell \leq r_h-1$. Consider the binary representation of $d$ and insert leading zeros to form a bit string $w$ of length $\ell$. Decorate $\tilde\m_{j_h}$ with $w$. The resulting decorated partition (with parts written in non-increasing order) is in $\MDS(N)$. \medskip

\noindent {\bf Note:} To see that $f-N_h$ above is always even consider the three cases below.\medskip 

(i) If $h=1$, then $N_h=0$. In this case we must have had $f=2$. Thus, $f-N_h$ is even. \medskip

(ii) If $f$ is odd, then after the merge we have one part equal to $2^km$ contributing to $N_h$. All other parts contributing to $N_h$ are divisible by $2\cdot 2^km$.  Thus, $N_h$ is odd and   $f-N_h$ is even. 

\medskip

(iii) If $f$ is even and at least $2$, then after the merge we have no part equal to $2^km$ contributing to $N_h$. All  parts contributing to $N_h$ are divisible by $2\cdot 2^km$.  Thus, $N_h$ is even and   $f-N_h$ is even.

\begin{example}Consider the partition $$\lambda=(2^8\cdot 3, 2^6\cdot 3, 105, 25, 2^3\cdot 3, \underbrace{2^2\cdot 3, \ldots ,2^2\cdot 3}_{23 \mbox{ times }}, 9, 2\cdot 3, 2 \cdot 1)$$ $$=(768,192, 105, 25, 24, 12^{23}, 9, 6, 2)\in \C(n).$$ We have $k=2$ and $f=23$. Glaisher's bijection transforms $\lambda$ as follows. 

$$(2^8\cdot 3, 2^6\cdot 3, 105, 25, 2^3\cdot 3, \underbrace{2^2\cdot 3, \ldots ,2^2\cdot 3}_{23 \mbox{ times }}, 9, 2\cdot 3, 2 \cdot 1)$$ $$\downarrow$$ $$ (2^8\cdot 3, 2^6\cdot 3, 105, 25, \underbrace{2^3\cdot 3, \ldots ,2^3\cdot 3}_{12 \mbox{ times }}, 2^2\cdot 3, 9, 2\cdot 3, 2 \cdot 1)$$ $$\downarrow$$ $$ (2^8\cdot 3, 2^6\cdot 3, 105, \underbrace{2^4\cdot 3, \ldots ,2^4\cdot 3}_{6 \mbox{ times }},25,  2^2\cdot 3, 9, 2\cdot 3, 2 \cdot 1)$$ $$\downarrow$$ $$ (2^8\cdot 3, 2^6\cdot 3, 105, 2^5\cdot 3,2^5\cdot 3 ,2^5\cdot 3,25,  2^2\cdot 3, 9, 2\cdot 3, 2 \cdot 1)$$ $$\downarrow$$ $$ (2^8\cdot 3,  2^6\cdot 3, 2^6\cdot 3, 105, 2^5\cdot 3, 25,  2^2\cdot 3, 9, 2\cdot 3, 2 \cdot 1)$$  $$\downarrow$$ $$ \mu = (2^8\cdot 3,  2^7\cdot 3, 105, 2^5\cdot 3, 25,  2^2\cdot 3, 9, 2\cdot 3, 2 \cdot 1)$$

The parts of the form $2^{r}\cdot 3$ with $r\geq 2$ are $\tilde \mu_4=2^2\cdot 3=12, \tilde \mu_6=2^5\cdot 3=96, \tilde\mu_8=2^7\cdot 3=384,$ and $\tilde\mu_9=2^8\cdot 3=768$. Since $12+96<23\cdot2^2\cdot 3\leq 12+96+384$, the decorated part will be $2^7\cdot 3=384$. We have $h=3$ and $N_3=\ds\frac{2^2\cdot 3+2^5\cdot 3}{2^2\cdot 3}=1+2^3=9$. Thus $d=\ds\frac{23-9}{2}-1=6$ and $\ell=7-2-1=4$. Thus $w=0110$ and the resulting decorate partition is 

$$\mu=(2^8\cdot 3, (2^7\cdot 3)_{0110}, 105, 2^5\cdot 3, 25, 2^2\cdot 3, 9, 2\cdot 3, 2\cdot 1)$$ $$= ( 768, 384_{0110}, 105, 96, 25, 12, 9, 6,2)\in\MDS(1407).\ $$

\end{example}

\subsection{$b_1(n)$ as the cardinality of a set of overpartitions} \label{b1}

As in section \ref{b}, we use Glaisher's bijection and calculate $b_1(n)$ by summing up the difference between the number of parts of $\varphi(\lambda)$ and the number of different parts of $\lambda$ of each partition $\lambda\in \OO(n)$. In $\varphi(\lambda)$ there is be a part of size $2^im$, with $m$ odd if and only if there is an $1$ is position $i$ of the binary representation of the multiplicity of $m$ in $\lambda$. After the merge, each odd part in $\lambda$ creates as many parts in $\varphi(\lambda)$ as the number of $1$s in the binary representation of its multiplicity. Moreover, if we write the parts of $\varphi(\lambda)$ as $2^{k_i}m_i$ with $m_i$ odd and $k_i \geq 0$, all parts $2^sm$ with the same largest odd factor $m$ are obtained by merging parts equal to $m$ in $\lambda$. 

For each positive odd integer $2j-1$, denote by $oddm(2j-1)$ the number of parts of $\varphi(\lambda)$ of the form $2^s(2j-1)$ for some $s\geq 0$. Then, given $\lambda \in \OO(n)$,  the difference between the number of parts of $\varphi(\lambda)$ and the number of different parts of $\lambda$ equals $$\sum_{\stackrel{j}{ oddm(2j-1)\neq 0}} (oddm(2j-1)-1).$$

Let $\OD(n)$ be the set of overpartitions of $n$ with distinct parts in which exactly one part is overlined. Part $2^sm$ with $s \geq 0$ and $m$ odd may be overlined only if there is a part $2^tm$ with $t<s$. In particular, no odd part can be overlined. By an overpartition with distinct parts we mean that all parts have multiplicity one, In particular,  $p$ and $\bar p$ cannot not both appear as parts of the overpartition. The discussion above proves the following interpretation of $b_1(n)$.

\begin{proposition} Let $n\geq 1$. Then, $b_1(n)=|\OD(n)|.$ \end{proposition}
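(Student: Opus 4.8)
The plan is to prove the final proposition, $b_1(n)=|\OD(n)|$, by showing that the summand description of $b_1(n)$ just derived matches a weighted count of distinct partitions, and that this weighted count equals the number of overpartitions with exactly one overlined part in $\OD(n)$. The preceding discussion establishes that, for each $\lambda\in\OO(n)$, the contribution to $b_1(n)$ is $\sum_{j:\,oddm(2j-1)\neq 0}\bigl(oddm(2j-1)-1\bigr)$, where the sum is over odd residue classes represented among the parts of $\varphi(\lambda)$. So the first step is to interpret $b_1(n)$ as a sum over $\mu=\varphi(\lambda)\in\D(n)$ of the quantity $\sum_m (N_m(\mu)-1)$, where $m$ ranges over the distinct odd numbers $m$ that occur as the odd part of some part of $\mu$, and $N_m(\mu)$ counts how many parts of $\mu$ share that largest odd factor $m$. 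Because $\varphi$ is a bijection between $\OO(n)$ and $\D(n)$, summing over $\lambda\in\OO(n)$ is the same as summing over $\mu\in\D(n)$.

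Next I would recast $\sum_m (N_m(\mu)-1)$ as a count of valid overline placements on $\mu$. For a fixed odd $m$, if the parts of $\mu$ with largest odd factor $m$ are $2^{s_1}m<2^{s_2}m<\cdots<2^{s_{N_m}}m$, then the rule in the definition of $\OD(n)$ permits overlining exactly those of these parts $2^{s}m$ for which some smaller part $2^{t}m$ with $t<s$ is also present. Under that rule, every one of these parts except the smallest one, $2^{s_1}m$, is eligible to be overlined, giving exactly $N_m-1$ admissible overlinings within the class $m$. Summing over all odd $m$ occurring in $\mu$ gives $\sum_m(N_m(\mu)-1)$ admissible ways to overline a single part of $\mu$, which is precisely the number of overpartitions in $\OD(n)$ whose underlying distinct partition is $\mu$. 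Thus $|\OD(n)|=\sum_{\mu\in\D(n)}\sum_m(N_m(\mu)-1)$.

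Finally I would match the two expressions: the per-$\mu$ count $\sum_m(N_m(\mu)-1)$ derived for $|\OD(n)|$ is identical to the per-$\lambda$ summand $\sum_{j:\,oddm(2j-1)\neq 0}(oddm(2j-1)-1)$ derived for $b_1(n)$, once one identifies $oddm(2j-1)$ with $N_{2j-1}(\varphi(\lambda))$ and uses $\mu=\varphi(\lambda)$. Summing the equal summands over the two sides of the bijection $\varphi$ yields $b_1(n)=|\OD(n)|$. The main obstacle, and the point requiring the most care, is the bookkeeping that the overline eligibility rule ("$2^sm$ may be overlined only if a part $2^tm$ with $t<s$ is present") produces exactly $N_m-1$ choices per odd class rather than, say, $N_m$ or some other count; this hinges on confirming that the smallest part in each odd class is the unique ineligible one and that odd parts (with $s=0$) are automatically ineligible, which the definition of $\OD(n)$ already guarantees. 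Everything else is a direct translation through the already-established facts about $\varphi$ and the weight of each odd class, so no substantial new combinatorics is needed.
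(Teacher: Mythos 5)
Your proposal is correct and follows essentially the same route as the paper: both interpret $b_1(n)$ as a sum over $\mu=\varphi(\lambda)\in\D(n)$ of the excess $\sum_m(N_m(\mu)-1)$ over odd classes, and both identify this per-partition quantity with the number of admissible overlinings (every part of an odd class except the smallest). The paper leaves these details largely implicit in the preceding discussion, so your write-up simply makes the same argument explicit.
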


\subsection{A combinatorial proof for $c_1(n)=b_1(n)$}

\noindent \textit{From  $\OD(n)$ to $\T(n)$:} 

Start with an overpartition $\mu \in \OD(n)$. Suppose the overlined part is $\mu_i=2^sm$ for some $s\geq1$ and $m$ odd. Then there is a part $\mu_j=2^tm$ of $\mu$ with $t<s$. Let $k$ be the largest positive integer such that $s^km$ is a part of $\mu$ and $k<s$. To obtain $\lambda \in \T(n)$ from $\mu$, split $\mu_i$ into two parts equal to $2^km$ and one part equal to $2^jm$ whenever there is a $1$ in position $j$ of the binary representation of $(2^s-2^{k+1})$, i.e, one part equal to $2^jm$ for each $j=k+1, k+2, \ldots, s-1$.

\begin{example} Let $$\mu=(\overline{768}, 48, 46, 9, 6, 5, 2)=(\overline{2^8\cdot 3},  2^4\cdot 3, 2\cdot 23, 9, 2\cdot 3, 5, 2\cdot 1)\in \OD(884).$$ Then $2^8\cdot 3$ splits into two parts equal to $2^4\cdot 3$ and one part each of size $2^5\cdot 3, 2^6\cdot 3, 2^7\cdot 3$. Thus, we obtain the partition $$\lambda=(2^7\cdot 3, 2^6\cdot 3, 2^5\cdot 3, 2^4\cdot 3, 2^4\cdot 3, 2^4\cdot 3, 2\cdot 23, 9, 2\cdot 3, 5, 2\cdot 1)$$ $$=(384, 192, 96, 48^3, 46, 9, 6, 5, 2)\in \T(884).\ \ \ \ \ \ \ \ \ \ \ \ \ \ \ \ \ \ \ \ \ \ \  $$ \medskip

Similarly, $(768, \overline{48}, 46, 9, 6, 5, 2)=(2^8\cdot 3,  \overline{2^4\cdot 3}, 2\cdot 23, 9, 2\cdot 3, 5, 2\cdot 1)\in \OD(884)$ transforms into $(2^8\cdot 3, 2\cdot 23, 2^3\cdot 3,  2^2\cdot 3,  9, 2\cdot 3, 2\cdot 3, 2\cdot 3, 5, 2\cdot 1)=$\\ $(768, 46, 24, 12, 9, 6^3, 5, 2)\in \T(884)$. 
\end{example}

\noindent \textit{From $\T(n)$ to $\OD(n)$:}

Start with a partition $\lambda \in \T(n)$. Merge the parts of $\lambda$ repeatedly using Glaisher's bijection $\varphi$ to obtain a partition $\mu$ with distinct parts. Overline the smallest part of $\mu$ that is not a part of $\lambda$. Note that if the thrice repeated part of $\lambda$ is $2^km$ for some $k \geq 0$ and $m$ odd, then in $\mu$ there is a part equal to $2^km$ and the overlined part is of the form $2^tm$ for some $t>k$. Thus, we obtain  an overpartition   in $\OD(n)$. 

\begin{example} Let $$\lambda=(2^7\cdot 3, 2^6\cdot 3, 2^5\cdot 3, 2^4\cdot 3, 2^4\cdot 3, 2^4\cdot 3, 2\cdot 23, 9, 2\cdot 3, 5, 2\cdot 1)$$ $$= (384, 192, 96, 48^3, 46, 9, 6, 5, 2)\in \T(884).\ \ \ \ \ \ \ \ \ \ \ \ \ \ \ \ \ \ \ \    $$ Merging equal parts as in Glaisher's bijection, we obtain the partition $\mu=(768, 48, 46, 9, 6, 5, 2)=(2^8\cdot 3,  2^4\cdot 3, 2\cdot 23, 9, 2\cdot 3, 5, 2\cdot 1)\in \D(884)$. The smallest part of $\mu$ that is not a part of $\lambda$ is $768$. Thus, we obtain the overpartition $(\overline{768}, 48, 46, 9, 6, 5, 2)\in \OD(884)$.\medskip

Similarly, after applying Glaisher's bijection, the partition $$\lambda=(2^8\cdot 3, 2\cdot 23, 2^3\cdot 3,  2^2\cdot 3,  9, 2\cdot 3, 2\cdot 3, 2\cdot 3, 5, 2\cdot 1)$$ $$ = (768, 46, 24, 12, 9, 6^3, 5, 2)\in \T(884) \ \ \ \ \ \ \ \ \ \ \ \ \ \ \ \ \    $$ maps to $\mu = (768, 48, 46, 9, 6, 5, 2)=(2^8\cdot 3,  2^4\cdot 3, 2\cdot 23, 9, 2\cdot 3, 5, 2\cdot 1)\in \D(884)$. The smallest part of $\mu$ not appearing in $\lambda$ is $48$. Thus, we obtain  the overpartition $(768, \overline{48}, 46, 9, 6, 5, 2)\in \OD(884)$.\end{example} \medskip

\begin{remark} \label{rem} We could have obtained the  transformation above from the combinatorial proof of part (ii) of Theorem \ref{T1}. In the transformation from $\C(n)$ to $\MDS(n)$, we have $f=3$, $h=2$, and $N_h=1$. Thus $d=0$ and the decorated part is the smallest part in the transformed partition $\mu$ that did not occur in the original partition $\lambda$. Then $$r_h=1+k+\max\{j \mid  2^k\cdot m, 2^{k+1}\cdot m, \ldots, 2^{k+j}\cdot m \mbox { are all parts of } \lambda\}.$$ Thus, in $\mu$, the decorated part $2^{r_h}\cdot m$ is decorated with a bit string of consisting of all zeros and of length $r_h-k-1$, one less than the difference in exponents of $2$ of the decorated part and the next smallest part with the same largest odd factor $m$. Since the decoration of a partition in $\MDS(n)$ is completely determined by the part being decorated, we could simply just overline the part.  \end{remark}

\section{Extending Theorem \ref{T2} to an Euler type identity}

It is natural to look for the analogue of Theorem\ref{T1} (i) in the setting of Theorem \ref{T2}. If in Euler's partition identity we relax the condition in $\D(n)$ to  allow one part to be repeated exactly three times, how do we relax the condition on $\OO(n)$  to obtain an identity? We can search for the condition by following the proof of Theorem \ref{T1} part (i) but only for decorated partitions from $\MDS(n)$, were an even part is decorated with a bit string consisting entirely of zeros as in Remark \ref{rem}, i.e., of length one less than the difference in exponents of $2$ of the decorated part and the next smallest part with the same largest odd factor $m$. We identify these decorated partitions with the overpartitions in $\OD(n)$. Following the algorithm, we see that the set $\T(n)$ has the same cardinality as the set of partitions with exactly one even part $2^k\cdot m$, $k \geq 1, m$ odd, which appears with odd multiplicity. Moreover, part m appears with multiplicity at least $2^{k-1}$ and the multiplicity of $m$ must belong to an interval $[2^s-2^{k-1}, 2^s-1]$ for some $s\geq k$. Given the elegant description of the partitions in $\T(n)$ it would be desirable to find a nicer set of the same cardinality consisting of partitions with only one even part under some constraints. \medskip

Let $\A'(n)$ be the subset of $\A(n)$ consisting of partitions $\lambda$ of $n$ such that the set of even parts has exactly one element and satisfying the following two conditions: 

1) the even part $2^k \cdot m$, $k\geq 1, m$ odd,  has odd multiplicity and  

2) the largest odd factor $m$ of the even part is a part of $\lambda$ with multiplicity between $1$ and $2^k-1$.  \medskip

Let $a_1(n)=|\A'(n)|$. 

\begin{theorem} \label{T3} Let $n\geq 1$. Then, $a_1(n)=b_1(n)$. 

\end{theorem}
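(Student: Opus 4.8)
The plan is to prove $a_1(n)=b_1(n)$ by exhibiting an explicit bijection between $\A'(n)$ and $\OD(n)$, since Proposition in section \ref{b1} already gives $b_1(n)=|\OD(n)|$. The whole machinery is essentially in place: the discussion opening Section 3 shows that the partitions produced from $\OD(n)$ by the algorithm of the $\MDS(n)\to\A(n)$ map (specialized to decorations that are all-zero bit strings) are exactly the partitions with one even part $2^k\cdot m$ of odd multiplicity whose odd factor $m$ occurs with multiplicity in $[2^s-2^{k-1},2^s-1]$. The real content of the theorem is to check that this latter description coincides with the clean two-condition description of $\A'(n)$, and then to package the two directions as mutually inverse maps.

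First I would define the forward map $\Phi:\OD(n)\to\A'(n)$ by restricting the $\MDS(n)\to\A(n)$ bijection of Section \ref{part i} to decorated partitions whose decoration is the all-zeros string, using Remark \ref{rem} to identify such decorated partitions with the overpartitions in $\OD(n)$ (overline the part that would carry the all-zero decoration). Concretely, given $\mu\in\OD(n)$ with overlined part $2^sm$ and nearest smaller same-$m$ part $2^km$ (so $k<s$), I would set $\ell=s-k-1$, $d=0$, and run the three splitting steps; the output $\lambda$ has unique even part $2^{k+1-?}m$—here I must pin down the exponent precisely. With $w=\underbrace{0\cdots0}_{\ell}$ of length $\ell(w)=\ell=s-k-1$ and $d_w=0$, step 1 splits the decorated part $2^sm$ into $d_w+1=1$ part of size $2^{s-\ell(w)}m=2^{k+1}m$ together with parts equal to $m$; so the unique even part is $2^{k+1}m$, with $k+1\ge 1$. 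I would then verify directly that $\lambda\in\A'(n)$: its even part has odd multiplicity (this is forced by $d_w=0$ giving an odd count after all the larger same-$m$ parts collapse onto $2^{k+1}m$), and the multiplicity of $m$ lands in the required interval $[2^{s}-2^{k},2^{s}-1]$ after reindexing, matching condition 2) with the exponent of the even part playing the role of $k$ in the statement of $\A'(n)$.

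Second I would define the inverse $\Psi:\A'(n)\to\OD(n)$ directly, mirroring the $\A(n)\to\MDS(n)$ map: apply Glaisher's bijection $\varphi$ to $\lambda\in\A'(n)$ to get a distinct-part partition $\mu$, then overline the smallest part of $\mu$ that is not a part of $\lambda$, exactly as in the $\T(n)\to\OD(n)$ map. Using conditions 1) and 2) I must check that $\mu$ really does have a part not present in $\lambda$ (so the overline is well-defined) and that the overlined part has the form $2^tm$ with $t$ exceeding the exponent of the smaller same-$m$ part, so the result is a legal element of $\OD(n)$; the odd-multiplicity condition 1) is precisely what guarantees that merging $\lambda$'s copies of the even part produces a carry that creates a new, larger same-$m$ part in $\mu$, and the interval condition 2) controls exactly which part is created. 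Finally I would confirm $\Psi\circ\Phi=\mathrm{id}$ and $\Phi\circ\Psi=\mathrm{id}$, which reduces to the already-established inverse relationship between the two maps in Section \ref{part i} once the all-zero-decoration restriction is checked to be preserved.

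The main obstacle I expect is bookkeeping the exponents and the multiplicity interval so that the two descriptions of the target set line up \emph{exactly}, with no off-by-one error between the statement's parameter $k$ (the exponent of the even part of $\lambda\in\A'(n)$) and the parameter $k$ in the $\OD(n)$-side algorithm (the exponent of the nearest smaller same-$m$ part of $\mu$); these differ by one, and the interval $[2^s-2^{k-1},2^s-1]$ appearing in the preamble must be shown equal to condition 2)'s range $[1,2^{k}-1]$ after the shift $s\mapsto s-k$ and relabeling. Verifying that condition 1) (odd multiplicity of the even part) is equivalent, under the bijection, to the overlined part lying strictly above the next same-$m$ part—rather than coinciding with an existing part—is the one genuinely delicate equivalence, and I would handle it by the same three-case parity argument (on whether the merged multiplicity is odd or even) used in the Note following the $\C(n)\to\MDS(n)$ map. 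Everything else is a routine transcription of the Section \ref{part i} bijection to the all-zero-decoration slice.
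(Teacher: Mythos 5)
There is a genuine gap: your forward map does not land in $\A'(n)$. Restricting the $\MDS(n)\to\A(n)$ bijection of Section \ref{part i} to all-zero decorations produces exactly the set described in the preamble of Section 3 --- partitions whose unique even part $2^{K}m$ has odd multiplicity and whose odd factor $m$ has multiplicity in $[2^s-2^{K-1},2^s-1]$ for some $s\geq K$ --- and the paper introduces $\A'(n)$ precisely because this is a \emph{different} set with the same cardinality, not a relabeling of it. Your hoped-for ``shift and relabel'' does not reconcile the two: with your parameters the even part of the output is $2^{k+1}m$ while the multiplicity of $m$ lies in $[2^s-2^k,2^s-1]$, which for $s>k+1$ exceeds the bound $2^{k+1}-1$ demanded by condition 2) of $\A'(n)$. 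Concretely, $\mu=(\overline{8},1)\in\OD(9)$ has $s=3$, $k=0$, $w=00$, and your algorithm yields $(2,1^7)$, whose odd factor $1$ has multiplicity $7\notin[1,2^1-1]$; so the output is not in $\A'(9)$. Your proposed inverse fails for the same underlying reason: applying Glaisher to $\lambda=(8,1,1)\in\A'(10)$ gives $(8,2)$, and overlining the smallest part not in $\lambda$ would overline $2$, which is not a legal overline in $\OD(10)$ since there is no part $2^t\cdot 1$ with $t<1$.

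The paper's actual bijection is different in an essential way: given $\mu\in\OD(n)$ with overlined part $2^sm$, it \emph{keeps $2^sm$ intact} as the unique even part (rather than splitting it down toward the next smaller same-$m$ part), splits each larger part $2^um$ into $2^{u-s}$ copies of $2^sm$, splits each smaller part $2^vm$ into $2^v$ copies of $m$, and splits every other even part into odd parts; this is what yields odd multiplicity for $2^sm$ and multiplicity of $m$ between $1$ and $2^s-1$, i.e.\ membership in $\A'(n)$. In the reverse direction one overlines the part of $\varphi(\lambda)$ \emph{equal to the even part of $\lambda$} (which survives the merging because of the odd-multiplicity condition), not the smallest part of $\varphi(\lambda)$ absent from $\lambda$. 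Your proposal is missing this new map; the restriction-of-the-old-bijection strategy cannot be repaired into a proof of the theorem as stated.
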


\begin{proof} \noindent \textit{From  $\OD(n)$ to $\A'(n)$:} 

Start with an overpartition $\mu\in\OD(n)$. Suppose the overlined part is $\mu_i=2^s\cdot m, s\geq 1, m$ odd.  Then there is a part $\mu_j=2^tm$ of $\mu$ with $0\leq t<s$. Keep part $2^s\cdot m$ and remove its overline. Split each part of the form $2^u\cdot m$ with $u>s$ (if it exists) into $2^{u-s}$ parts equal to $2^s\cdot m$. Split each part of the form $2^v\cdot m$ with $0\leq v<s$ into $2^v$ parts equal to $m$. Split every other even part into odd parts. Call the obtained partition $\lambda$. Then the multiplicity of  $2^s\cdot m$ in $\lambda$ is odd. Since there is a part $\mu_j=2^tm$ of $\mu$ with $0\leq t<s$, there will be at least one part equal to $m$ in $\lambda$. The largest possible multiplicity of $m$ in $\lambda$ is $2^{s-1}+2^{s-2}+\cdots + 2+1=2^s-1$. Thus $\lambda \in \A'(n)$. \medskip

\noindent \textit{From $\A'(n)$ to  $\OD(n)$:} 

Let $\lambda \in \A'(n)$. Merge equal terms repeatedly (as in Glaisher's bijection) to obtain a partition with distinct parts. Overline the part equal to the even part in $\lambda$. Call the obtained overpartition $\mu$. Since the even part $2^k\cdot m$ in $\lambda$ has odd multiplicity, there will be a part in $\mu$ equal to $2^k\cdot m$. Since $m$ has multiplicity between $1$ and $2^k-1$ in $\lambda$, there will be a part of size $2^i\cdot m$ in $\mu$ whenever there is a $1$ in position $i$ in the binary representation of the multiplicity of $m$ in $\lambda$. The binary representation of $2^k-1$ is a bit string of length $k-1$ consisting entirely of ones. Thus, in $\mu$ there is at least one part of size $2^t\cdot m$ with $0\leq t<k$ and $\mu\in \OD(n)$. 

\end{proof}

From Theorem \ref{T2} and \ref{T3} we obtain the following Euler type identity.

\begin{corollary} Let $n\geq 1$. Then, $a_1(n)=c_1(n)$. 

\begin{example} Let $n=10$. Then $$\T(20)=\{(7,1,1,1), (5,2,1,1,1), (4,3,1,1,1),(4,2,2,2),(3,2,2,2,1),(3,3,3,1)\}$$ and $$\A'(10)=\{(7,2,1),(3,2,2,2,1), (5,4,1), (3,4,1,1,1), (6,3,1), (8,1,1)\}.$$

\end{example}

\end{corollary}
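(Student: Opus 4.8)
The plan is to obtain $a_1(n)=c_1(n)$ with no further combinatorial work, by chaining the two theorems already established. Theorem \ref{T2} gives $c_1(n)=b_1(n)=|\OD(n)|$ and Theorem \ref{T3} gives $a_1(n)=b_1(n)=|\OD(n)|$, so transitivity yields $a_1(n)=c_1(n)$. The crucial observation is that both theorems were proved by exhibiting explicit bijections with the \emph{same} intermediate set $\OD(n)$ of overpartitions with one overlined part; hence the equality is witnessed by an honest bijection, not merely an equality of counts.

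To stay in the bijective spirit of the paper, I would make the correspondence explicit by composing the two bijections through $\OD(n)$. Starting from $\lambda\in\A'(n)$, first apply the map of Theorem \ref{T3}: merge equal parts by Glaisher's $\varphi$ and overline the distinct part whose value equals the unique even part $2^k m$ of $\lambda$, producing $\mu\in\OD(n)$ with overlined part $2^k m$ (valid since the odd multiplicity of $2^k m$ leaves a part $2^k m$ after merging, and the $1\le c\le 2^k-1$ copies of $m$ merge into parts $2^i m$ with $i<k$). Then apply the map of Theorem \ref{T2} from $\OD(n)$ to $\T(n)$: let $k'$ be the largest exponent with $k'<k$ and $2^{k'}m$ a part of $\mu$, split the overlined $2^k m$ into two copies of $2^{k'}m$ together with one part $2^j m$ for each $k'<j<k$, and absorb the intermediate powers as prescribed in that algorithm. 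The resulting partition lies in $\T(n)$ with thrice-repeated part $2^{k'}m$, and the inverse is the composition $\T(n)\to\OD(n)\to\A'(n)$ of the two reverse maps. One should verify on the $n=10$ data in the example above that this composite carries $\A'(10)$ bijectively onto $\T(10)$.

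I expect essentially no obstacle here: the substantive content lives in Theorems \ref{T2} and \ref{T3}, and the corollary is a one-line consequence of transitivity through $\OD(n)$. The only mild effort, should one want a self-contained description of the direct map $\A'(n)\to\T(n)$ that bypasses the intermediate overpartition, is the bookkeeping that tracks how the exponent $k$ of the even part, its odd multiplicity, and the multiplicity $c$ of $m$ transform under the two splittings; concretely $k'=\lfloor\log_2 c\rfloor$, and the parts with odd factor $m$ on either side are governed by the binary expansions of $c$ and of the odd multiplicity of $2^k m$. This is routine, so I would present the corollary as an immediate consequence of the preceding theorems and, if space permits, remark that the witnessing bijection is the displayed composite.
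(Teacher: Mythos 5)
Your proposal matches the paper exactly: the corollary is stated as an immediate consequence of Theorems \ref{T2} and \ref{T3}, both of which were proved via bijections with the common intermediate set $\OD(n)$, so transitivity gives $a_1(n)=c_1(n)$. Your additional explicit description of the composite bijection $\A'(n)\to\OD(n)\to\T(n)$ is correct and consistent with the two maps as defined in the paper, though the paper itself does not spell it out.
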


\section{One part repeated exactly two times, all other parts distinct}

Given the specialization to Theorem \ref{T1} obtained in Theorems \ref{T2} and \ref{T3}, it is natural to ask what happens if one considers the set of partitions such that one part is repeated exactly two times and all other parts are distinct. Let $\S(n)$ be the subset of $\C(n)$ consisting of such partitions and let $c_2(n)=|\S(n)|$. We would like to express $c_2(n)$ as an excess of parts between partitions in $\D(n)$ and $\OO(n)$ (where parts are counted with different multiplicities) to obtain an identity similar to $c(n)=b(n)$ and $c_1(n)=b_1(n)$. \medskip

Note that $b(n)$ is  the difference between the number of parts in all partitions in $\OO(n)$ and the number of parts in all partitions in $\D(n)$. Thus, each part appearing in a partition in $\OO(n)$ is counted with the  multiplicity with which it appears in the partition. On the other hand, $b_1(n)$ is  the difference between the number of parts in all partitions in $\D(n)$ and the number of \textit{different} parts in all partitions in $\OO(n)$. Here, each part appearing in a partition in $\OO(n)$ is counted with multiplicity $1$ for that partition. 

\begin{definition} Given a partition $\lambda \in \OO(n)$, suppose the multiplicity of $i$ in $\lambda$ is $m_i$. If $i$ appears in $\lambda$, we define the \textit{binary order of magnitude} of the multiplicity of $i$ in $\lambda$, denoted $bomm_{\lambda}(i)$, to be the number of digits in the binary representation of $m_i$. 

\end{definition}

Note that, if $\m_i>0$, then  $bomm_{\lambda}(i)=\lfloor \log_2 (m_i)\rfloor +1$. \medskip

Let $b_2(n)$ denote  the difference between the number of parts in all partitions in $\OO(n)$,  each counted as many times as its $bomm$, and the number of parts in all partitions in $\D(n)$. Since the number of parts in all partitions in $\D(n)$ equals the number of $1$ in all binary representations of all multiplicities in all partitions of $\OO(n)$, it follows that $b_2(n)$ equals the number of $0$ in all binary representations of all multiplicities in all partitions of $\OO(n)$. We have the following theorem. 

\begin{theorem} Let $n\geq 1$. Then, $c_2(n)=b_2(n)$.

\end{theorem}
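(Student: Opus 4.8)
The plan is to mirror the strategy used for Theorems~\ref{T1} and~\ref{T2}: first reinterpret $b_2(n)$ as the cardinality of an explicit set of marked distinct partitions, and then construct a bijection between that set and $\S(n)$. As established in the paragraph preceding the statement, $b_2(n)$ equals the total number of zeros appearing in the binary representations of all multiplicities in all partitions $\lambda\in\OO(n)$. I would first translate this count into ``gaps'' of distinct partitions via Glaisher's bijection $\varphi$. If $\mu=\varphi(\lambda)$ and $m$ is odd, the set of exponents $\{j:2^jm\text{ is a part of }\mu\}$ is exactly the binary support of the multiplicity of $m$ in $\lambda$; hence a zero in position $i$ (which necessarily lies strictly below the leading bit) corresponds to an odd number $m$ and an exponent $i$ with $2^im\notin\mu$ but $2^jm$ a part of $\mu$ for some $j>i$. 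Call such a pair $(m,i)$ a \emph{gap} of $\mu$. Letting $\mathcal{G}(n)$ be the set of pairs $(\mu,(m,i))$ with $\mu\in\D(n)$ and $(m,i)$ a gap of $\mu$, the correspondence between zeros and gaps (together with the fact that $\varphi$ is a bijection $\OO(n)\to\D(n)$) gives $b_2(n)=|\mathcal{G}(n)|$.

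Next I would construct the bijection $\mathcal{G}(n)\to\S(n)$. Given $(\mu,(m,i))$, let $j^\star$ be the least exponent exceeding $i$ with $2^{j^\star}m$ a part of $\mu$; by the definition of a gap the exponents $i,i+1,\dots,j^\star-1$ are all absent from $\mu$. Remove the part $2^{j^\star}m$ and replace it by two copies of $2^im$ together with one part each of sizes $2^{i+1}m,2^{i+2}m,\dots,2^{j^\star-1}m$. These have total size $2^{j^\star}m$, occupy previously empty exponents, and introduce a single repeated part $2^im$ of multiplicity exactly two, so the resulting partition $\lambda$ lies in $\S(n)$. The inverse map sends $\lambda\in\S(n)$, whose unique doubled part is $2^im$, to $(\varphi(\lambda),(m,i))$: merging the two copies of $2^im$ cascades (carries) upward through the inserted singletons $2^{i+1}m,\dots,2^{j^\star-1}m$ and lands on the now-vacant exponent $j^\star$, so that $\varphi(\lambda)=\mu$, while $(m,i)$ is read off as the exponent and odd factor of the doubled part and is a genuine gap of $\mu$.

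The main point to verify carefully is that these two maps are mutually inverse, and this hinges on the interplay between the carry in Glaisher's merging and the block of inserted singletons. Concretely, I must check that (i) after the split, $2^im$ is the only repeated part and occurs exactly twice --- which holds because the exponents $i,\dots,j^\star-1$ are vacated by the very choice of $j^\star$, so no accidental collisions or higher multiplicities arise --- and that (ii) applying $\varphi$ to $\lambda$ restores precisely the support of $\mu$ at the odd factor $m$ while leaving every other odd factor untouched. I also need the routine but essential observation that a zero can occur only strictly below the leading bit of a multiplicity, which is exactly what guarantees the existence of a larger part $2^{j^\star}m$ in any gap and hence that $j^\star$, and with it the whole construction, is well defined. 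Establishing this carry behaviour cleanly is the only real obstacle; once it is in place, the bijection $\S(n)\leftrightarrow\mathcal{G}(n)$ together with $b_2(n)=|\mathcal{G}(n)|$ yields $c_2(n)=b_2(n)$.
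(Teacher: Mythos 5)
Your proposal is correct and is essentially the paper's own argument: the paper also reinterprets $b_2(n)$ as the cardinality of a set of distinct partitions carrying one extra marker (there, an all-zeros bit string of length $\ell(w)$ on a part $2^sm$ whose predecessor $2^{s-1}m$ is absent; your ``gap'' $(m,i)$ is exactly the same datum with $i=s-\ell(w)-1$), and then uses the identical split of $2^{j^\star}m$ into singletons $2^{j^\star-1}m,\dots,2^{i+1}m$ plus a doubled $2^im$, inverted by the Glaisher carry. Your gap notation is a slightly more transparent bookkeeping of the same bijection, but there is no substantive difference in the argument.
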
 

\begin{proof} Let $\MDS'(n)$ be the subset of decorated partitions $\mu$ in $\MDS(n)$ such that a  part $2^sm$ of $\mu$ with $s\geq 1$ and $m$ odd can be decorated only if $2^{s-1}m$ is not a part of $\mu$.  The decoration $w$ must satisfy  $d_w=0$ and $0\leq \ell(w)\leq s-k-2$, where  $k=\max\{j<s \mid 2^jm \mbox{ is a part of } \mu\}$ if there is a part $2^jm$ in $\mu$ and $k=-1$ otherwise.  

Recall that, in a bit string, the most right position of a digit is position $0$ and we count positions from right to left. To see that $b_2(n)=|\MDS'(n)|$ we argue as follows. If there is a part $2^sm$ in $\mu\in \MDS'(n)$, with $s\geq 0$ and $m$ odd, then there is a $1$ in position $s$ of the binary representation of $m_m$, the multiplicity of $m$ in the partition $\lambda \in \OO(n)$ with $\varphi(\lambda)=\mu$. If the part $2^sm$ of $\mu$ is decorated, then $s\geq 1$ and $2^{s-1}m$ is not a part of $\mu$. Then,  digit $1$ in position $s$ of the binary representation of $m_m$ is followed by  $s-k-1$ zeros, where $k$ is defined as above. Note that in this case $k<s-1$ and therefore $s-k-1>0$. Since there are $s-k-1$ possible decorations for part $2^sm$, the total number of possible decorations of parts of $\mu$ equals the number of $0$ in the binary representations of all multiplicities of the corresponding $\lambda\in \OO(n)$.

Next, we show that $c_2(n)=b_2(n)$ by creating a one-to-one correspondence between $\S(n)$ and $\MDS'(n)$. \medskip

\noindent \textit{From $\MDS'(n)$ to $\S(n)$:}
 
Start with a decorated partition $\mu\in \MDS'(n)$. Suppose part $\mu_i=2^sm$, $s\geq 1$ and $m$ odd, is decorated with word $w$ with $d_w=0$ and $0\leq \ell(w)\leq s-k-2$, where $k$ is defined as above. Then, we split $2^sm$ into parts $2^{s-1}m, 2^{s-2}m, \ldots, 2^{s-\ell(w)}m$ and two parts equal to $2^{s-\ell(w)-1}m$. Note that if $\ell(w)=0$ then $2^s$ splits into two parts equal to $2^{s-1}m$. If there is a part $2^jm$ in $\mu$, then $k$ is defines as the largest $k<s$ such that $2^km$ is a part of $\mu$. Since $s-\ell(w)-1\geq k+1$, part $2^{s-\ell(w)-1}m$ appears exactly twice and all other parts appear once. The obtained partition is in $\S(n)$. 

\medskip

\noindent \textit{From $\S(n)$ to$\MDS'(n)$:}

Let $\lambda\in \S(n)$. Suppose the part that appears exactly twice is $2^km$ with $k \geq 0$ and $m$ odd. Merge the parts of $\lambda$ repeatedly using Glaisher's bijection $\varphi$ to obtain a partition $\mu$ with distinct parts. There will be no part equal to $2^km$ in $\mu$. The decorated part will be the only part of $\mu$ that is not a part of  $\lambda$. As in the proof of Theorem \ref{T1} (ii), $N_h=0$ and, since $f=2$, we have $d=\ds\frac{f-N_h}{2}-1=0$. Moreover, if the decorated part is $2^sm$ with $m$ odd, then $\ell(w)=s-k-1$. If there is a part $2^tm$ with $t<s$ in $\mu$, then, by construction, $t\leq k-1$. Then $\mu\in \MDS'(n)$. 

\end{proof}

In order to establish an Euler type identity let $\A''(n)$  be the subset of $\A(n)$ consisting of partitions $\lambda$ of $n$ such that the set of even parts has exactly one element and satisfying the following two conditions: 

1) the even part $2^k \cdot m$, $k\geq 1, m$ odd,  has odd multiplicity and  

2) the largest odd factor $m$ of the even part is a part of $\lambda$ with multiplicity between $0$ and $2^k-2$.  

\medskip

Let $a_2(n)=|\A''(n)|$. \medskip

Following the proof of Theorem \ref{T3}, one can show that $a_2(n)=b_2(n)$. Then, we have the following theorem. 

\begin{theorem} Let $n\geq 1$. Then, $a_2(n)=b_2(n)=c_2(n)$. 

\end{theorem}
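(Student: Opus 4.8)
The plan is to prove the two equalities $a_2(n)=b_2(n)$ and $c_2(n)=b_2(n)$ separately and then chain them. The equality $c_2(n)=b_2(n)$ is already established in the preceding theorem, whose proof shows both that $b_2(n)=|\MDS'(n)|$ and that $\MDS'(n)$ is in bijection with $\S(n)$. So it remains only to prove $a_2(n)=b_2(n)$, i.e.\ $|\A''(n)|=|\MDS'(n)|$. I would obtain this by a bijection $\A''(n)\leftrightarrow\MDS'(n)$ modeled on the bijection $\A'(n)\leftrightarrow\OD(n)$ of Theorem \ref{T3}, the only change being that the single overlined part of an element of $\OD(n)$ is replaced by an even part carrying an all-zero decoration of variable length, as prescribed by $\MDS'(n)$.

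The cleanest way to set this up, I think, is to realize the desired bijection as a \emph{restriction} of the bijection of Theorem \ref{T1}(i). Indeed, an element of $\MDS'(n)$ is a decorated partition whose decorated part $2^s m$ carries the all-zero string of some length $\ell$ with $0\le\ell\le s-k-2$, and such a decoration is in particular a legitimate decoration in the sense of $\MDS(n)$; hence $\MDS'(n)\subseteq\MDS(n)$. The bijection $\MDS(n)\leftrightarrow\A(n)$ therefore restricts to an injection of $\MDS'(n)$ into $\A(n)$, so $b_2(n)$ equals the size of the image, and the whole problem reduces to identifying that image with $\A''(n)$. On the forward side, specializing the splitting rule of Theorem \ref{T1}(i) to $d_w=0$ sends $2^sm$ to a single copy of the even part $2^{s-\ell}m$ while each larger part with odd factor $m$ coalesces into an even number of copies of $2^{s-\ell}m$; consequently the resulting even part has odd multiplicity, and the remaining material splits into copies of $m$ and of smaller odd parts. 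On the inverse side, one applies Glaisher's map $\varphi$ to $\lambda$ and decorates the appropriate part, the all-zero string being forced because the quantity $d=f-N_h-1$ of Theorem \ref{T1}(i) now vanishes; its length $\ell=r_h-k$ records the gap, exactly as an overline is recovered in Remark \ref{rem}.

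The main obstacle is precisely this image characterization: I must verify that the one-even-part partitions produced from $\MDS'(n)$ are exactly those satisfying the two conditions defining $\A''(n)$, and that conversely each such partition arises exactly once. Concretely, I would translate the two constraints $d_w=0$ and $0\le\ell\le s-k-2$ of $\MDS'(n)$ into intrinsic conditions on $\lambda$: the vanishing of $d$ must be shown equivalent to the even part having odd multiplicity, and the length bound must be shown to govern the admissible multiplicities of the largest odd factor $m$ (including the boundary case in which $m$ does not occur at all, which corresponds to a maximal gap below the decorated part). Tracking how the decoration length $\ell$ and the positions of the smaller parts $2^vm$ jointly determine the multiplicity of $m$ is the delicate point, and it is exactly here that one must confirm the stated numerical range is the correct one so that no element of $\A''(n)$ is omitted or produced twice; bijectivity itself is then inherited for free from Theorem \ref{T1}(i).

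Once the image is pinned down, the conclusion is immediate: composing the bijection $\A''(n)\leftrightarrow\MDS'(n)$ with the bijection $\MDS'(n)\leftrightarrow\S(n)$ of the preceding theorem yields $a_2(n)=|\MDS'(n)|=b_2(n)=c_2(n)$, which is the claim.
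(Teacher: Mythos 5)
Your overall strategy---quote $c_2(n)=b_2(n)$ from the preceding theorem and reduce everything to a bijection between $\A''(n)$ and $\MDS'(n)$---is exactly what the paper intends (it offers no details beyond ``following the proof of Theorem \ref{T3}''). However, the step you yourself single out as the delicate one, namely verifying that the image of $\MDS'(n)$ under the restriction of the Theorem \ref{T1}(i) map is $\A''(n)$, actually fails, and not because of a repairable oversight in your bookkeeping. Take $\mu=(6,4_{0})\in\MDS'(10)$: here the decorated part is $4=2^2\cdot 1$, $m=1$, $d_w=0$, $\ell(w)=1$, and the map of Theorem \ref{T1}(i) produces $\lambda=(3,3,2,1,1)$, whose unique even part is $2=2^1\cdot 1$ while its odd factor $m=1$ occurs with multiplicity $2>2^1-2$; so $\lambda\notin\A''(10)$. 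This is systematic: whenever $\ell(w)\geq 1$ the decorated part $2^sm$ sheds $2^s-2^{s-\ell(w)}\geq 2^{s-\ell(w)}$ copies of $m$, already exceeding the cap $2^{s-\ell(w)}-2$ in condition 2) of $\A''(n)$. The true image of $\MDS'(n)$ is the ``inelegant'' set of the kind described at the opening of Section 3, with the multiplicity of $m$ confined to intervals $[2^K(2^{\ell}-1),\,2^K(2^{\ell}-1)+2^{K-1}-1]$, not the set $\A''(n)$ as defined.

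Worse, no bijection of any kind can close the gap, because $a_2(n)=b_2(n)$ is false with the paper's stated definition of $\A''(n)$. For $n=6$ one has $b_2(6)=c_2(6)=2$: the two zeros come from the multiplicities $2=(10)_2$ in $(3,3)$ and $6=(110)_2$ in $(1,1,1,1,1,1)$, and correspondingly $\S(6)=\{(4,1,1),(3,3)\}$ and $\MDS'(6)=\{(6_{\emptyset}),(4,2_{\emptyset})\}$. But $\A''(6)=\{(6),(4,1,1),(2,2,2)\}$ has three elements, since each has a single even part with odd multiplicity and the odd factor $m$ occurring $0$, $2$, $0$ times, within the permitted ranges $[0,0]$, $[0,2]$, $[0,0]$. (The paper's own example is symptomatic of the problem: it lists $(6,3,1)\in\A''(10)$, which violates condition 2) because $m=3$ occurs once while $2^1-2=0$; the element actually produced by the splitting map is $(3,3,2,1,1)$.) So the definition of $\A''(n)$ must be corrected---for instance, replaced by the image characterization above---before any proof can exist; with that corrected set your plan of inheriting bijectivity from Theorem \ref{T1}(i) does go through. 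As written, the image identification you defer is not merely delicate: it is the point at which both the argument and the stated theorem break down.
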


\begin{example} Let $n=10$. Then $$\S(10)=\{(8,1,1), (6,2,1,1), (5,3,1,1), (6,2,2), (4,2,2,1), (4,3,3), (4,4,1), (5,5)\}$$ and $$\A''(10)=\{(5,3,2), (2,2,2,2,2), (4,3,3), (5,4,1), (6, 3, 1), (6, 1, 1, 1, 1), (8,1,1), (10)\}.$$

\end{example}

\bigskip



\begin{thebibliography}{00}


\bibitem{A17}	
	G. E. Andrews, 
 \textit{Euler's partition identity and two problems of George Beck}, Math. Student 86 (2017), no. 1-2, 115--119.
 
 \bibitem{B1} The On-Line Encyclopedia of Integer Sequences, oeis: A090867. 1, 2
 
 \bibitem{B2} The On-Line Encyclopedia of Integer Sequences, oeis: A265251. 1

\bibitem{FT17}		
	S. Fu and D. Tang,  \textit{Generalizing a partition theorem of Andrews},
Preprint
(2017),
arXiv:1705.05046.

\bibitem{Y18} Jane Y. X. Yang, \textit{Combinatorial proofs and generalizations on conjectures related with Euler's partition theorem}, Preprint (2018), arXiv:1801.06815. 
	
\end{thebibliography}
\end{document}